\newcommand{\N}{\mathbb{N}}
\newcommand{\Z}{\mathbb{Z}}
\newcommand{\Q}{\mathbb{Q}}
\newcommand{\mH}{\mathcal{H}}
\newcommand{\mS}{\mathcal{S}}
\newcommand{\mP}{\mathcal{P}}
\newcommand{\IS}{I_{\mS}}
\newcommand{\lS}{\leq_{\mS}}
\newcommand{\x}{\mathbf{x}}
\renewcommand{\t}{\mathbf{t}}
\newcommand{\mG}{\mathcal{G}}
\renewcommand{\le}{\leqslant}
\renewcommand{\leq}{\leqslant}
\renewcommand{\ge}{\geqslant}
\renewcommand{\geq}{\geqslant}
\theoremstyle{plain}
\newtheorem{theorem}{Theorem}[section]
\newtheorem{lemma}[theorem]{Lemma}
\newtheorem{corollary}[theorem]{Corollary}
\newtheorem{proposition}[theorem]{Proposition}
\theoremstyle{definition}
\newtheorem{example}[theorem]{Example}
\theoremstyle{remark}
\newtheorem{case}{Case}
\title{M\"obius function of semigroup posets through Hilbert series}
\author[J. Chappelon]{Jonathan Chappelon\,*}
\thanks{* Corresponding Author: Phone/Fax: +33-467144166. Email: jonathan.chappelon@um2.fr}
\address{Universit\'{e} de Montpellier, Institut Montpelli\'{e}rain Alexander Grothendieck, Case Courrier 051, Place Eug\`{e}ne Bataillon, 34095 Montpellier Cedex 05, France}
\email{jonathan.chappelon@umontpellier.fr}
\author[I. Garc\'{i}a-Marco]{Ignacio Garc\'{i}a-Marco}
\email{ignacio.garcia-marco@umontpellier.fr}
\author[L.P. Montejano]{Luis Pedro Montejano}
\email{lpmontejano@gmail.com}
\author[J.L. Ram\'{i}rez Alfons\'{i}n]{Jorge Luis Ram\'{i}rez Alfons\'{i}n}
\email{jramirez@umontpellier.fr}
\keywords{M\"obius function, locally finite
poset, semigroup, Hilbert series}
\subjclass[2010]{20M15; 05A99; 06A07; 11A25; 20M05; 20M25}
\date{July 14, 2015}
\begin{document}

\begin{abstract}
In this paper, we investigate the M\"obius function $\mu_{\mathcal{S}}$ associated to a (locally finite) poset arising from a semigroup $\mathcal{S}$ of $\mathbb{Z}^m$. We introduce and develop a new approach to study $\mu_{\mathcal{S}}$ by using the Hilbert series of $\mathcal{S}$. The latter enables us to provide formulas for $\mu_{\mathcal{S}}$ when $\mathcal{S}$ belongs to certain families of semigroups. Finally, a characterization for a locally finite poset to be isomorphic to a semigroup poset is given.
\end{abstract}

\maketitle

%%%%%%%%%%%%%%%%%%%%%%%%%%%%%%%
\section{Introduction}\label{introduction}
%%%%%%%%%%%%%%%%%%%%%%%%%%%%%%%

The {\it M\"obius function} is an important concept that was introduced by Gian-Carlo Rota more than 50 years ago in \cite{Rota}. It is a generalization to ({\it locally finite}) posets of the classical M\"obius arithmetic function on the integers (given by the M\"obius function of the poset obtained from the positive integers partially ordered by divisibility). We refer the reader to \cite{Rota} for a large number of its applications.

In this paper, we investigate the M\"obius function associated to posets arising naturally from subsemigroups of $\Z^m$ as follows. Let $a_1, \ldots,a_n$ be nonzero vectors in $\Z^m$ and let $\mS = \langle a_1,\ldots,a_n \rangle$ denote the semigroup generated by $a_1, \ldots,a_n$, that is,
$$
\mS = \langle a_1,\ldots,a_n \rangle = \{x_1 a_1 + \cdots + x_n a_n \, \vert \, x_1,\ldots,x_n \in \N\}.
$$
We say that $\mS$ is {\it pointed} if $\mS \cap (-\mS) = \{0\}$, where $-\mS := \{- x \, \vert \, x \in \mS\}$. Whenever $\mS$ is pointed, $\mS$ induces on $\Z^m$ a poset structure whose partial order $\lS$ is defined by $x \lS y$ if and only if $y - x \in \mS$ for all $x$ and $y$ in $\Z^m$. This (locally finite) poset will be denoted by $(\Z^m, \leq_{\mS})$. We denote by $\mu_{\mS}$ the M\"obius function associated to $(\Z^m,\lS)$. As far as we are aware, $\mu_{\mS}$ has only been investigated when $\mS$ is a {\it numerical semigroup}, i.e., when $\mS \subset \N$ and $\gcd\{a_1,\ldots,a_n\} = 1$. Moreover, the only known results concerning $\mu_{\mS}$ are an old theorem due to Deddens \cite{Deddens}, which determines the value of $\mu_{\mS}$ when $\mS$ has exactly two generators, and a recent paper due to Chappelon and Ram\'{i}rez Alfons\'{i}n \cite{CA}, where the authors investigate $\mu_{\mS}$ when $\mS = \langle a, a + d, \ldots, a + kd \rangle$ with $a, k , d \in \Z^+$. In both papers, the authors approach the problem by a thorough study of the intrinsic properties of each semigroup. Here, we introduce and develop a new and more general method to study $\mu_{\mS}$ by means of the {\it Hilbert series of the semigroup $\mS$}. This enables us to provide formulas for $\mu_{\mS}$ when $\mS$ belongs to some families of semigroups. We also investigate when a locally finite poset is isomorphic to a semigroup poset.

This paper is organized as follows. In the next section, after  reviewing some standard notions of the M\"obius function, we then interpret them for semigroup posets. In Section~3, we present two general results (Theorems \ref{chido} and \ref{genfunmob}) giving a new and general approach to study $\mu_{\mS}$ through the \emph{Hilbert series of the semigroup $\mS$}. This enables us in Section~4 to provide formulas for $\mu_{\mS}$ when $\mS$ is a \emph{semigroup with a unique Betti element} and when $\mS = \langle a_1,a_2,a_3 \rangle \subset \N$ is a {\it complete intersection numerical semigroup} (generalizing results in \cite{CA, Deddens}). Finally, in Section~5, we characterize those locally finite posets $\mP$ that are isomorphic to the poset associated to a semigroup $\mS$. In this case $\mu_{\mP}$ can be computed by means of $\mu_{\mS}$ (this will be illustrated with the well-known classical M\"{o}bius arithmetic function).

%%%%%%%%%%%%%%%%%%%%%%%%%%%%%%%%%%%%%%%%%%%
\section{M\"{o}bius function associated to a semigroup poset}\label{basic}
%%%%%%%%%%%%%%%%%%%%%%%%%%%%%%%%%%%%%%%%%%%

Let $(\mP,\leq_{\mP})$ be a partially ordered set, or {\it poset} for short. The {\it strict partial order} $<_{\mP}$ is the reduction of $\leq_{\mP}$ given by $a <_{\mP} b$ if and only if $a\leq_{\mP} b$ and $a\neq b$. Let $a$ and $b$ be two elements of the poset $\mP$. The \emph{interval} between $a$ and $b$ is defined by
$$
{\left[a,b\right]}_{\mP} := \left\{ c\in \mP\ \middle|\ a\leq_{\mP} c\leq_{\mP} b \right\}.
$$
A poset is said to be {\it locally finite} if every interval has finite cardinality. We only consider locally finite posets in this paper. A {\it chain} of length $l\geq 0$ between $a$ and $b$ is a subset of ${\left[a,b\right]}_{\mP}$ containing $a$ and $b$, of cardinality $l+1$ and totally ordered by $<_{\mP}$, that is $\left\{ a_0,a_1,\ldots,a_l\right\}\subset{\left[a,b\right]}_{\mP}$ such that
$$
a=a_0 <_{\mP} a_1 <_{\mP} a_2 <_{\mP} \cdots <_{\mP} a_{l-1} <_{\mP} a_l = b.
$$
For any nonnegative integer $l$, we denote by $c_l(a,b)$ the number of distinct chains between $a$ and $b$ of length $l$. This number always exists because the poset $\mP$ is assumed to be locally finite.

For instance, the number of chains $c_2(2,12)$, where the poset is $\N$ partially ordered by divisibility, is equal to $2$. Indeed, there are exactly $2$ chains of length $2$ between $2$ and $12$ in ${\left[2,12\right]}_{\N}=\left\{2,4,6,12\right\}$, which are $\left\{2,4,12\right\}$ and $\left\{2,6,12\right\}$.

For any locally finite poset $\mP$, the {\it M\"{o}bius function} $\mu_{\mP}$ is the integer-valued function on $\mP\times \mP$ defined by
\begin{equation}\label{eq1}
\mu_{\mP}(a,b) = \sum_{l\geq 0}{{(-1)}^{l}c_l(a,b)},
\end{equation}
for all elements $a$ and $b$ of the poset $\mP$. Note that this sum is always finite because, for $a$ and $b$ given, the interval ${\left[a,b\right]}_{\mP}$ has finite cardinality. The concept of M\"{o}bius function for a locally finite poset $(\mP,\le)$ was introduced by Rota in \cite{Rota}. There, Rota proves the following property of the M\"obius function: for all $(a,b)\in \mP\times \mP$,
\begin{equation}\label{eq2}
\mu_{\mP}(a,a) = 1 \quad \text{and} \quad \sum_{c\in
\left[a,b\right]_{\mP}}\mu_{\mP}(a,c) = 0 \text{, if} \ a <_{\mP} b.
\end{equation}

Here, posets associated to semigroups of $\Z^m$ are considered. We begin by summarizing some generalities on semigroups that will be useful for the understanding of this work. Let $\mS := \langle a_1,\ldots,a_n \rangle \subset \Z^m$ denote the subsemigroup of $\Z^m$ generated by $a_1,\ldots,a_n \in \Z^m$, i.e.,
$$
\mS := \langle a_1,\ldots,a_n \rangle = \{ x_1 a_1 + \cdots + x_n a_n \, \vert \, x_1,\ldots,x_n \in \N\}.
$$
The semigroup $\mS$ induces the binary relation $\lS$ on $\Z^m$ given by
$$
 x \lS y\ \Longleftrightarrow \ y - x \in \mS.
$$
It turns out that $(\Z^m, \lS)$ is a poset if and only if $\mS$ is pointed. Indeed, $\lS$ is antisymmetric if and only if $\mS$ is pointed. Moreover, if $\mS$ is pointed then the poset $(\Z^m, \lS)$ is locally finite.

Let $\mu_{\mS}$ denote the M\"obius function associated to $(\Z^m,\lS)$. It is easy to see that $\mu_{\mS}$ can be considered as a univariate function of $\Z^m$. Indeed, for all $x,y\in\Z^m$ and for all $l \ge 0$, one can observe that $c_l(x,y)=c_l(0,y-x)$. Thus, we obtain
$$
\mu_{\mS}(x,y)=\mu_{\mS}(0,y-x)
$$
for all $x,y\in\Z$.

In the sequel of this paper, we shall only consider the reduced M\"obius function $\mu_{\mS} : \Z^m \longrightarrow \Z$ defined by
$$
\mu_{\mS}(x) := \mu_{\mS}(0,x),\text{ for all}\ x\in\Z^m.
$$
Thus, the formula given by (\ref{eq2}) may now be simplified when the locally finite poset is $(\Z^m,\lS)$.

\begin{proposition}\label{in} (\cite[Proposition 1]{CA})
Let $\mS$ be a pointed semigroup and let $x\in\Z^m$. Then,
$$
\sum_{b \in \mS}\mu_{\mS}(x - b) = \left\{\begin{array}{ll}
 1 & \text{if } x = 0,\\
 0 & \text{otherwise}.
\end{array}\right.
$$
\end{proposition}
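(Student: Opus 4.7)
The plan is to reduce the statement to Rota's identity~(\ref{eq2}) applied to the interval ${[0,x]}_{\mS}$, after rewriting the sum in terms of the two-variable Möbius function.

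First I would record two elementary facts about the reduced Möbius function: (i) translation invariance, which is already stated in the text, gives $\mu_{\mS}(x-b)=\mu_{\mS}(0,x-b)=\mu_{\mS}(b,x)$ for any $b\in\Z^m$; and (ii) $\mu_{\mS}(y)=0$ whenever $y\notin \mS$. The second follows because $\mu_{\mS}(y)=\mu_{\mS}(0,y)=\sum_l(-1)^l c_l(0,y)$, and the existence of a chain from $0$ to $y$ forces $y\in\mS$ (the length-$0$ chain requires $y=0\in\mS$, and any longer chain ends with $y\in\mS$ since $\mS$ is closed under addition). These two facts let me restrict the sum $\sum_{b\in\mS}\mu_{\mS}(x-b)$ to those $b\in\mS$ for which $x-b\in\mS$, i.e. to $b$ with $b\lS x$.

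Next I would split into cases. If $x=0$, then $x-b=-b$ lies in $\mS$ only when $b=0$ (by pointedness, $\mS\cap(-\mS)=\{0\}$), so the sum collapses to $\mu_{\mS}(0)=1$. If $x\neq 0$ and $x\notin\mS$, then $x-b\in\mS$ for some $b\in\mS$ would give $x=b+(x-b)\in\mS$, a contradiction, so every term vanishes. The remaining case is $x\in\mS\setminus\{0\}$, where $0<_{\mS} x$ and the interval ${[0,x]}_{\mS}=\{c\in\mS\mid x-c\in\mS\}$ is finite.

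For that case I would apply the Rota identity~(\ref{eq2}) to the pair $(0,x)$ and reindex the resulting sum by $c=x-b$. Under this change of variable, $c$ ranges over ${[0,x]}_{\mS}$ precisely when $b$ ranges over $\{b\in\mS\mid x-b\in\mS\}$; combined with the vanishing observed above, this yields
\[
\sum_{b\in\mS}\mu_{\mS}(x-b)=\sum_{\substack{b\in\mS\\ x-b\in\mS}}\mu_{\mS}(0,x-b)=\sum_{c\in{[0,x]}_{\mS}}\mu_{\mS}(0,c)=0.
\]
There is no real obstacle here beyond bookkeeping; the only subtle point is to justify that $\mu_{\mS}$ truly vanishes outside $\mS$ so that the restriction of the summation range is legal, and to verify that the substitution $c=x-b$ is a bijection between the two index sets, which both follow directly from the definitions.
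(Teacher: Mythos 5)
Your proposal is correct and follows essentially the same route as the paper's own proof: both reduce the claim to Rota's identity (\ref{eq2}) on the interval $[0,x]$ via the substitution $b \mapsto x-b$, after noting that $\mu_{\mS}$ vanishes outside $\mS$ and that pointedness handles the case $x=0$. Your explicit treatment of the case $x\notin\mS$ (where the interval is empty) is a small point the paper glosses over, but it is not a different method.
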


\begin{proof}
From (\ref{eq1}), we know that  $\mu_{\mS}(b) = 0$ for all $b\notin \mS$. Since $\mS$ is pointed, it follows that
$$
\sum_{b \in \mS}\mu_{\mS}(0 -b) = \mu_{\mS}(0) = 1.
$$
Finally, if $x \neq 0$, then we apply (\ref{eq2}) and we obtain that
$$
0 = \sum_{b \in [0,x]_{\Z^m}} \mu_{\mS}(b) = \sum_{b \in \mS \atop x - b \in \mS} \mu_{\mS}(b) =  \sum_{b \in \mS \atop x - b \in \mS} \mu_{\mS}(x - b) = \sum_{b \in \mS} \mu_{\mS}(x-b).
$$
\end{proof}

\noindent Proposition \ref{in} will be very useful to obtain most of our results.

%%%%%%%%%%%%%%%%%%%%%%%%%%%%%%%%%%%%%%
\section{The Hilbert and M\"obius series}\label{secHilbert}
%%%%%%%%%%%%%%%%%%%%%%%%%%%%%%%%%%%%%%

In this section, we present two results (Theorem \ref{chido} and Theorem \ref{genfunmob}), both relating the Hilbert series of the semigroup $\mS$ with the M\"obius function of the poset $(\Z^m, \leq_{\mS})$. Before proving these theorems, some basic notions on multivariate Hilbert series are quickly recalled. For a thorough study of multivariate Hilbert series, we refer the reader to \cite{KR}.

Let $k$ be any field and let $\mS = \langle a_1,\ldots,a_n \rangle$ be a subsemigroup of $\Z^m$. The semigroup $\mS$ induces a grading on the ring of polynomials $R := k[x_1,\ldots,x_n]$ by assigning ${\rm deg}_{\mS}(x_i) := a_i \in \Z^m$, for all $i \in \{1,\ldots,n\}$. Then, the \emph{$\mS$-degree} of the monomial $m := x_1^{\alpha_1} \cdots x_n^{\alpha_n}$ is ${\rm deg}_{\mS}(m) := \sum_{i=1}^{n}\alpha_i a_i \in \Z^m$. A polynomial is said to be \emph{$\mS$-homogeneous} if all of its monomials have the same $\mS$-degree and an ideal is \emph{$\mS$-homogeneous} if it is generated by $\mS$-homogeneous polynomials. For all $b \in \Z^m$, we denote by $R_b$ the $k$-vector space generated by all $\mS$-homogeneous polynomials of $\mS$-degree $b$.

Whenever $\mS$ is pointed, the $k$-vector space $R_b$ has finite dimension, for all $b \in \Z^m$ (see \cite[Proposition 4.1.19]{KR}). Let $I \subset R$ be an $\mS$-homogeneous ideal. The \emph{multigraded Hilbert function} of $M := R / I$ is
$$
H\!F_{M}: \Z^m \longrightarrow \N,
$$
defined by $H\!F_{M}(b) :=  {\rm dim}_k(R_{b}) - {\rm dim}_k(R_{b} \cap I)$, for all $b \in \Z^m$.

For every $b = (b_1,\ldots,b_m) \in \Z^m$, we denote by $\t^b$ the monomial $t_1^{b_1}\cdots t_m^{b_m}$ in the Laurent polynomial ring $\Z[t_1,\ldots,t_m, t_1^{-1},\ldots,t_m^{-1}]$. The \emph{multivariate Hilbert series of $M$} is the following formal power series in $\Z[[t_1,\ldots,t_m,t_1^{-1},\ldots,t_m^{-1}]]$:
$$
\mH_{M}(\t) := \sum_{b \in \Z^m} H\!F_{M}(b)\, \t^{b}.
$$

We denote by $I_{\mS}$ the \emph{toric ideal of $\mS$}, i.e., the kernel of the homomorphism of $k$-algebras
$$
\varphi: R \longrightarrow k[t_1,\ldots,t_m,t_1^{-1},\ldots,t_m^{-1}]
$$
induced by $\varphi(x_i) = \t^{a_i}$, for all $i \in \{1,\ldots,n\}$. It is well known that $I_{\mS}$ is $\mS$-homogeneous (see \cite[Corollary~4.3]{Sturm}). Moreover, the multivariate Hilbert series of $M = R/I_{\mS}$ with respect to the grading induced by $\mS$ is
\begin{equation}\label{series}
\mathcal H_{M}(\t) = \sum_{b \in \mS} \t^b.
\end{equation}
Indeed, $R_b = \{0\}$ and $H\!F_M(b) = 0$ whenever $b \notin \mS$. In addition, if $b \in \mS$, $\varphi$ induces an isomorphism of $k$-vector spaces between $R_b / (R_b \cap I)$ and $\{\alpha\, \t^b\, \vert \, \alpha \in k\}$, for all $b \in \mS$. Hence, $H\!F_{M}(b) =  1$ in this case.

From now on, the multivariate Hilbert series of $R / I_{\mS}$ is called the \emph{Hilbert series of $\mS$} and is denoted by $\mH_{\mS}(\t)$.

\begin{theorem}\label{chido}
Let $\mS$ be a pointed semigroup and let $c_1,\ldots,c_k$ be nonzero vectors in\, $\Z^m$. If we set
$$
\left(1-\t^{c_1}\right) \cdots \left(1 - \t^{c_k}\right)\ \mH_{\mS}(\t)  = \sum_{b \in \Z^m} f_b \, \t^b \in \Z[[t_1,\ldots,t_m,t_1^{-1},\ldots,t_m^{-1}]],
$$
then,
$$
\sum_{b \in \Z^m} f_b\, \mu_{\mS}(x - b) = 0
$$
for all $x \notin \left\{ \sum_{i \in A} c_i \ \middle|\  A \subset \{1,\ldots,k\} \right\}$.
\end{theorem}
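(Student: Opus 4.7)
The plan is to unfold both factors on the left, substitute into the target identity, and then collapse the inner sum via Proposition \ref{in}. First, using equation (\ref{series}) together with the expansion
$$
(1 - \t^{c_1}) \cdots (1 - \t^{c_k}) = \sum_{A \subset \{1,\ldots,k\}} (-1)^{|A|}\, \t^{\sum_{i \in A} c_i},
$$
I would extract the coefficient of $\t^b$ in the product and obtain
$$
f_b = \sum_{\substack{A \subset \{1,\ldots,k\} \\ b - \sum_{i \in A} c_i \in \mS}} (-1)^{|A|}.
$$

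Substituting this into $\sum_{b \in \Z^m} f_b\, \mu_{\mS}(x-b)$, interchanging the two summations and changing variables by $s := b - \sum_{i \in A} c_i \in \mS$, the expression rewrites as
$$
\sum_{A \subset \{1,\ldots,k\}} (-1)^{|A|} \sum_{s \in \mS} \mu_{\mS}\bigl(x - \textstyle\sum_{i \in A} c_i - s\bigr).
$$
By Proposition \ref{in}, each inner sum equals $1$ if $x = \sum_{i \in A} c_i$ and $0$ otherwise. Hence the whole expression collapses to
$$
\sum_{\substack{A \subset \{1,\ldots,k\} \\ \sum_{i \in A} c_i = x}} (-1)^{|A|},
$$
which is an empty sum, and thus vanishes, whenever $x \notin \{\sum_{i \in A} c_i \mid A \subset \{1,\ldots,k\}\}$.

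The main technical point to justify is the interchange of summations. A term of the double sum is nonzero only if both $b - \sum_{i \in A} c_i \in \mS$ (equivalently $\sum_{i \in A} c_i \lS b$) and $x - b \in \mS$ (equivalently $b \lS x$), so $b$ lies in the interval $[\sum_{i \in A} c_i,\, x]_{\lS}$. Since $\mS$ is pointed, the poset $(\Z^m,\lS)$ is locally finite, so each such interval is finite; as $A$ ranges over only $2^k$ subsets, the double sum has finitely many nonzero contributions and the reordering is unconditionally valid. This finiteness issue is the only delicate step; once it is addressed, the computation is entirely formal.
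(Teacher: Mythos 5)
Your proposal is correct and follows essentially the same route as the paper: extract the coefficient formula $f_b = \sum_{A : \, b - \sum_{i \in A} c_i \in \mS} (-1)^{|A|}$, interchange the sums, and apply Proposition~\ref{in} to each inner sum. Your version is in fact slightly more complete, since you explicitly justify the interchange of summations via local finiteness (which the paper only remarks on after the proof) and you identify the value of the sum for \emph{all} $x$, not just those outside $\Delta$.
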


\begin{proof}
From (\ref{series}), we know that
$$
f_b = \sum_{A \subset\{1,\ldots,k\} \atop b - \sum_{i \in A} c_i \in \mS} (-1)^{\vert A\vert},
$$
for all $b \in \Z^m$. Set $\Delta := \left\{ \sum_{i \in A}c_i \ \middle| \ A \subset \{1,\ldots,k\} \right\}$. By Proposition~\ref{in}, for all $x \notin \Delta$ and $A \subset \{1,\ldots,k\}$, we have that
$$
\sum_{b \in \mS} \mu_{\mS}\left(x - \sum_{i \in A} a_i - b\right) = 0.
$$
Hence, for all $x \notin \Delta$, it follows that
$$
\sum_{b \in \Z^m}  \alpha_b\, \mu_{\mS}(x-b) = \sum_{b \in \mS} \sum_{A \subset \{1,\ldots,k\}} (-1)^{\vert A \vert} \mu_{\mS}\left(x - \sum_{i\in A} c_i - b\right) = 0,
$$
where
$$
\alpha_b = \sum_{A \subset\{1,\ldots,k\} \atop b - \sum_{i \in A} c_i \in \mS} (-1)^{\vert A \vert} = f_b.
$$
This completes the proof.
\end{proof}

Notice that the formula $\left(1-\t^{c_1}\right) \cdots \left(1 - \t^{c_k}\right)\ \mH_{\mS}(\t) = \sum_{b \in \Z^m} f_b \, \t^b$ might have an infinite number of terms. Nevertheless, for every $x \in \Z^m$, the formula $\sum_{b \in \Z^m} f_b\, \mu_{\mS}(x - b) = 0$ only involves a finite number of nonzero summands, since $\mS$ is pointed.

The following example illustrates how to apply Theorem \ref{chido} to compute $\mu_{\mS}$.

\begin{example}
Consider the semigroup $\mS = \langle 2, 3\rangle\subset \N$. We observe that $\mS = \N \setminus \{1\}$. Hence, $\mH_{\mS}(t) = 1 + \sum_{b \geq 2} t^b \in \Z[[t]]$ and $t^2 \,\mH_{\mS}(t) = t^2 + \sum_{b \geq 4} t^b$. It follows that
$$
(1 - t^2)\, \mH_{\mS}(t) = 1 + t^3.
$$
Applying Theorem \ref{chido}, we get that
$$
\mu_{\mS}(x) + \mu_{\mS}(x-3) = 0,
$$
for all $x \in \Z \setminus \{0,2\}$. Furthermore, by direct computation, we have $\mu_{\mS}(0) = 1$, $\mu_{\mS}(2) = -1$ and $\mu_{\mS}(x) = 0$ for all $x < 0$. This leads to the formula
$$
\mu_{\mS}(x) = \left\{
\begin{array}{rl}
1 & \text{if } x \geq 0 \text{ and } x \equiv 0 \text{ or } 5 \pmod{6},\\
-1 & \text{if } x \geq 0 \text{ and } x \equiv 2 \text{ or } 3 \pmod{6},\\
0 & \text{otherwise.}
\end{array} \right.
$$
\end{example}

From now on, we consider the {\it M\"obius series} $\mG_{\mS}$, i.e., the generating function of the M\"{o}bius function
$$
\mG_{\mS}(\t) := \sum_{b \in \Z^m} \mu_{\mS}(b)\, \t^b \in \Z[[t_1,\ldots,t_m,t_1^{-1},\ldots,t_m^{-1}]].
$$

\begin{theorem}\label{genfunmob}
Let $\mS$ be a pointed semigroup. Then,
$$
\mH_{\mS}(\t)\cdot \mG_{\mS}(\t)  = 1.
$$
\end{theorem}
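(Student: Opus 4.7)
The plan is to multiply the two series directly, identify the coefficient of each monomial $\t^b$ with the left-hand side of the identity in Proposition~\ref{in}, and conclude.

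More precisely, I would first expand
\[
\mH_{\mS}(\t)\cdot\mG_{\mS}(\t) = \Bigl(\sum_{c\in\mS}\t^c\Bigr)\Bigl(\sum_{d\in\Z^m}\mu_{\mS}(d)\,\t^d\Bigr) = \sum_{b\in\Z^m}\Bigl(\sum_{c\in\mS}\mu_{\mS}(b-c)\Bigr)\t^b,
\]
using formula (\ref{series}) for $\mH_{\mS}$ and the definition of $\mG_{\mS}$. Then I would apply Proposition~\ref{in} with $x = b$ to observe that the inner sum equals $1$ when $b = 0$ and $0$ otherwise, so that the product collapses to $\t^0 = 1$.

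Before doing this, I need to check that the product makes sense as an element of $\Z[[t_1,\ldots,t_m,t_1^{-1},\ldots,t_m^{-1}]]$, i.e.\ that for each fixed $b\in\Z^m$ only finitely many pairs $(c,d)$ with $c\in\mS$, $d\in\Z^m$, $c+d=b$ and $\mu_{\mS}(d)\neq 0$ contribute. Since $\mu_{\mS}(d) = 0$ whenever $d\notin\mS$ (as noted in the proof of Proposition~\ref{in}), any such pair satisfies $c\in\mS$ and $b-c\in\mS$, which means $c\in[0,b]_{\mS}$. Because $\mS$ is pointed, $(\Z^m,\lS)$ is locally finite and this interval is finite, justifying the interchange of summations.

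The argument is essentially a Möbius inversion in the formal series ring: Proposition~\ref{in} says precisely that the convolution of the characteristic series of $\mS$ with the Möbius series yields the Kronecker delta at $0$. The main (and only) subtlety is the bookkeeping needed to handle Laurent monomials $\t^b$ with $b\notin\mS$, and this is handled as above by the pointedness of $\mS$; no further computation is required.
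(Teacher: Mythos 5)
Your proof is correct and follows exactly the same route as the paper's: expand the product, identify the coefficient of $\t^b$ as $\sum_{c\in\mS}\mu_{\mS}(b-c)$, and apply Proposition~\ref{in}. The extra remark verifying that each coefficient of the product involves only finitely many nonzero terms (via local finiteness of $(\Z^m,\lS)$) is a welcome addition that the paper leaves implicit.
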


\begin{proof}
From the definitions of $\mH_{\mS}(\t)$ and $\mG_{\mS}(\t)$, we obtain that
$$
\mH_{\mS}(\t)\cdot \mG_{\mS}(\t) = \left(\sum_{b \in \mS}  \t^b\right)\left(\sum_{b\in\Z^m} \mu_{\mS}(b) \t^b\right) = \sum_{b \in \Z^m} \left(\sum_{c \in \mS}\mu_{\mS}(b-c)\right) \t^b.
$$
The result follows by Proposition~\ref{in}.
\end{proof}

Theorem~\ref{genfunmob} states that, whenever we can explicitly compute the inverse of $\mH_{\mS}(\t)$, we will be able to obtain $\mu_{\mS}$. We illustrate this idea in our next example.

\begin{example}\label{nm}
Let $\{e_1,\ldots,e_m\}$ denote the canonical basis of $\N^m$ and let $\mS = \langle e_1,\ldots,e_m \rangle = \N^m$. Clearly, we have that
$$
\mH_{\N^m}(\t) = \sum_{b \in\N^m}  \t^b = \frac{1}{(1-t_1) \cdots (1- t_m)}.
$$
Therefore, by Theorem~\ref{genfunmob}, we obtain
$$
\mG_{\N^m}(\t) = \left(1- t_1\right) \cdots \left(1- t_m\right) = \sum_{A \subset\{1,\ldots,m\}} (-1)^{\mid A \mid}\, \prod_{i \in A}\, t_i = \sum_{A\subset \{1,\ldots,m\}} (-1)^{\mid A \mid}\, \t^{\sum_{i \in A}e_i}.
$$
So we derive the following formula for $\mu_{\N^m}$:
$$
\mu_{\N^m}(x) = \left\{ \begin{array}{cl}
(-1)^{\vert A \vert} & {\text \ if \ } x = \sum_{i \in A} e_i \text{ for some } A \subset \{1,\ldots,m\}, \\ \ \\
0 & \text{otherwise}.
\end{array}\right.
$$
\end{example}

A pointed semigroup $\mS = \langle a_1,\ldots, a_n \rangle$ is called a \emph{complete intersection} semigroup if its corresponding toric ideal $\IS$ is a \emph{complete intersection} ideal, i.e., if $\IS$ is generated by $n-d$ $\mS$-homogeneous polynomials, where $d$ is the dimension of the $\Q$-vector space spanned by $a_1,\ldots,a_n$. For characterizations of complete intersection toric ideals, we refer the reader to \cite{FMS}.

Let $B = (b_1,b_2,\ldots,b_k)$ be a $k$-tuple of nonzero vectors in $\Z^m$ such that the semigroup $\mathcal T := \langle b_1,\ldots,b_k\rangle$ is pointed and let $b \in \Z^m$. We denote by $d_B(b)$ the number of non-negative integer representations of $b$ by $b_1,\ldots,b_k$, that is, the number of solutions of $b =\sum_{i=1}^{k} x_i b_i$, where $x_i$ is a nonnegative integer for all $i$. Since $\mathcal T$ is pointed, we know that $d_B(b)$ is finite, for all $b \in \Z^m$. Moreover, $d_B(0)=1$. It is well known (see, e.g., \cite[Theorem 5.8.15]{KR}) that its generating function is given by
$$
\sum_{b \in\Z^m} d_B(b)\, \t^b = \frac{1}{\left(1-\t^{b_1}\right)\left(1-\t^{b_2}\right)\cdots \left(1-\t^{b_k}\right)}.
$$

\begin{corollary}\label{mobden}
Let $\mS$ be a complete intersection pointed semigroup and assume that $\IS$ is generated by $n-d$ $\mS$-homogeneous polynomials of $\mS$-degrees $b_1,\ldots,b_{n-d} \in \Z^m$. Then,
$$
\mu_{\mS}(x) = \sum_{A \subset \{1,\ldots,n\}} (-1)^{\mid A \mid} \, d_B\left(x - \sum_{i \in A} b_i\right),
$$
for all $x \in \Z^m$, where $B = \{b_1,\ldots,b_{n-d}\}$.
\end{corollary}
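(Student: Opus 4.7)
The plan is to combine Theorem~\ref{genfunmob} with the explicit closed form of the Hilbert series that is available for a complete intersection, and then read off the coefficients of the resulting rational expression.

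First I would invoke the classical fact that, because $\IS$ is an $\mS$-homogeneous complete intersection in $R = k[x_1,\ldots,x_n]$, one can choose a regular sequence of $n-d$ $\mS$-homogeneous polynomials of $\mS$-degrees $b_1,\ldots,b_{n-d}$ generating $\IS$, and the associated Koszul complex is an $\mS$-multigraded free resolution of $R/\IS$. Since $\mS$ is pointed, every multigraded component is finite-dimensional, so Hilbert series are additive along this resolution, yielding the standard product formula
$$
\mH_{\mS}(\t) \;=\; \frac{\prod_{i=1}^{n-d}\bigl(1 - \t^{b_i}\bigr)}{\prod_{j=1}^{n}\bigl(1 - \t^{a_j}\bigr)}.
$$

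Next, Theorem~\ref{genfunmob} gives immediately
$$
\mG_{\mS}(\t) \;=\; \frac{1}{\mH_{\mS}(\t)} \;=\; \prod_{j=1}^{n}\bigl(1 - \t^{a_j}\bigr) \cdot \frac{1}{\prod_{i=1}^{n-d}\bigl(1 - \t^{b_i}\bigr)}.
$$
The second factor is exactly the generating function $\sum_{b\in\Z^m} d_B(b)\,\t^b$ recalled just before the statement, while the first factor expands as the finite sum $\sum_{A\subset\{1,\ldots,n\}} (-1)^{|A|}\,\t^{\sum_{i\in A} a_i}$. Multiplying these two expansions and extracting the coefficient of $\t^x$ on both sides of $\mG_{\mS}(\t) = \sum_{b\in\Z^m} \mu_{\mS}(b)\,\t^b$ then produces the announced identity (with the convention $d_B(y) = 0$ whenever $y \notin \langle b_1,\ldots,b_{n-d}\rangle$).

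The main obstacle is the first step: justifying the factored form of $\mH_{\mS}(\t)$ in the $\Z^m$-multigraded setting rather than the familiar $\N$-graded one. This reduces to checking that the Koszul complex on $\mS$-homogeneous generators of $\IS$ remains an $\mS$-multigraded free resolution of $R/\IS$ and invoking the usual alternating-sum argument for Hilbert series along a multigraded exact complex; once this is granted, the remainder of the argument is a purely formal manipulation of power series in $\Z[[t_1,\ldots,t_m,t_1^{-1},\ldots,t_m^{-1}]]$.
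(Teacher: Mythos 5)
Your proposal follows essentially the same route as the paper: the paper simply cites the product formula for the Hilbert series of a complete intersection from Kreuzer--Robbiano, then inverts it via Theorem~\ref{genfunmob} and multiplies out the two expansions exactly as you do, so your Koszul-complex justification of the multigraded product formula is just a fleshed-out version of that citation. One small point worth noting: what actually comes out of the computation (in both your argument and the paper's proof) is $d_B\bigl(x - \sum_{i \in A} a_i\bigr)$, so the $\sum_{i\in A} b_i$ appearing in the stated formula is a typo in the statement, not a discrepancy in your derivation.
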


\begin{proof}
By \cite[Page 341]{KR}, we have that
$$
\mH_{\mS}(\t) = \frac{ (1 - \t^{b_1}) \cdots (1 - \t^{b_{n-d}}) }{(1 - \t^{a_1}) \cdots (1 - \t^{a_n})}.
$$
Thus, from Theorem~\ref{genfunmob}, we obtain
$$
\mG_{\mS}(\t)
\begin{array}[t]{l}
 = \displaystyle\frac{1}{\mH_{\mS}(\t)} = \frac{ (1 - \t^{a_1}) \cdots (1 - \t^{a_n})}{{(1-\t^{b_1})\cdots (1-\t^{b_{n-d}})}} \\ \ \\
 = \displaystyle\left(\sum_{A \subset \{1,\ldots,n\}} (-1)^{\vert A \vert}\, \t^{\sum_{i \in A} a_i}\right) \left( \sum_{b \in \Z^m} d_B(b)
\, \t^b\right) \\ \ \\  = \displaystyle \sum_{b \in \Z^m} \sum_{A
\subset \{1,\ldots,n\}} (-1)^{\vert A \vert}\,
  d_B(b)\, \t^{b + \sum_{i \in A} a_i} \\ \ \\ = \displaystyle \sum_{b \in \Z^m} \sum_{A \subset \{1,\ldots,n\}} (-1)^{\vert A \vert}\,
  d_B\left(b -  \sum_{i \in A} a_i\right)\, \t^{b}.
\end{array}
$$
\end{proof}

%%%%%%%%%%%%%%%%%%%%%%%%%%%%%%%%%%%%%
\section{Explicit formulas for the M\"obius function}
%%%%%%%%%%%%%%%%%%%%%%%%%%%%%%%%%%%%%

In this section, we exploit the results of the previous section to obtain explicit formulas for $\mu_{\mS}$ when $\mS$ is a semigroup with a unique Betti element (Theorem \ref{uniquebetti}) and when $\mS$ is a complete intersection numerical semigroup generated by three elements (Theorem \ref{3ic}).

The results included in this section are consequences of Corollary~\ref{mobden}. However, they can also be obtained with a different proof by using Theorem~\ref{chido}.

%%%%%%%%%%%%%%%%%%%%%%%%%%%%%%%%%%%
\subsection{Semigroups with a unique Betti element.}
%%%%%%%%%%%%%%%%%%%%%%%%%%%%%%%%%%%

A semigroup $\mS \subset \N^m$ is said to have a \emph{unique Betti element} $b \in \N^m$ if its corresponding toric ideal is generated by a set of $\mS$-homogeneous polynomials of common $\mS$-degree $b$. Garc\'{i}a-S\'{a}nchez, Ojeda and Rosales proved \cite[Corollary 10]{GOR} that these semigroups are always complete intersection.

\begin{theorem}\label{uniquebetti}
Let $\mS = \langle a_1,\ldots, a_n \rangle \subset \N^m$ be a semigroup with a unique Betti element $b \in \N^m$. If we denote by $d$ the dimension of the $\Q$-vector space generated by $a_1,\ldots,a_n$, then we have
$$
\mu_{\mS}(x) = \sum_{j = 1}^t \, (-1)^{\mid A_j \mid} \, {{k_{A_j} + n - d - 1} \choose {k_{A_j}}},
$$
where $\{A_1,\ldots,A_t\} = \left\{A \subset \{1,\ldots,n\}\ \middle| \text{ there exists }  k_{A} \in \N \text{ such that } x - \sum_{i \in A} a_i = k_A\, b\right\}$.
\end{theorem}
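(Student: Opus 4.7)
The plan is to apply Corollary \ref{mobden} directly. Since $\mS$ has a unique Betti element $b$, by definition its toric ideal $\IS$ is generated by $\mS$-homogeneous polynomials all of $\mS$-degree $b$, and by the result of García-Sánchez, Ojeda and Rosales cited just before the theorem, $\mS$ is a complete intersection. Thus $\IS$ admits a system of exactly $n-d$ such generators, i.e., we are in the hypotheses of Corollary \ref{mobden} with $b_1 = b_2 = \cdots = b_{n-d} = b$. Writing $B$ for the resulting constant tuple of length $n-d$, Corollary \ref{mobden} gives
$$
\mu_{\mS}(x) = \sum_{A \subset \{1,\ldots,n\}} (-1)^{|A|}\, d_B\!\left(x - \sum_{i \in A} a_i\right).
$$

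The next step is to compute $d_B$ explicitly for this constant tuple. Its generating function is
$$
\sum_{c \in \Z^m} d_B(c)\, \t^c \;=\; \frac{1}{(1-\t^b)^{n-d}} \;=\; \sum_{k \geq 0} \binom{k + n - d - 1}{n - d - 1}\, \t^{k b},
$$
which is the standard negative-binomial expansion, valid in $\Z[[t_1,\ldots,t_m,t_1^{-1},\ldots,t_m^{-1}]]$ since $b \in \N^m$. Reading off coefficients, $d_B(c) = \binom{k + n - d - 1}{k}$ when $c = k b$ for some $k \in \N$, and $d_B(c) = 0$ otherwise.

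Plugging this into the formula from Corollary \ref{mobden}, only those subsets $A \subset \{1,\ldots,n\}$ for which $x - \sum_{i \in A} a_i = k_A b$ for some $k_A \in \N$ contribute, each contributing $(-1)^{|A|} \binom{k_A + n - d - 1}{k_A}$. These are exactly the $A_1,\ldots,A_t$ listed in the statement, yielding the claimed expression. The derivation is essentially a specialization of Corollary \ref{mobden}; the only point requiring care is that the Betti element $b$ and the generators $a_i$ lie in $\N^m$, which guarantees that $d_B$ is well-defined (i.e., $\langle b \rangle$ is a pointed subsemigroup) and that the geometric-series expansion above converges formally. No subtle obstruction arises.
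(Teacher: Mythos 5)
Your proposal is correct and follows essentially the same route as the paper: invoke the García-Sánchez--Ojeda--Rosales result to get the complete intersection property, apply Corollary~\ref{mobden} with the constant tuple $B=(b,\ldots,b)$ of length $n-d$, and observe that $d_B(y)=\binom{k+n-d-1}{k}$ when $y=kb$ and $0$ otherwise. The only difference is that you justify the formula for $d_B$ via the negative-binomial expansion of $(1-\t^b)^{-(n-d)}$, whereas the paper simply states it; this is a harmless elaboration.
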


\begin{proof}
By Corollary~\ref{mobden}, for all $x \in \Z^m$, we have
$$
\mu_{\mS}(x) = \sum_{A \subset \{1,\ldots,m\}} (-1)^{\mid A \mid} \, d_B\left(x - \sum_{i \in A} a_i\right),
$$
where $B$ is the $(n-d)$-tuple $(b,\ldots,b)$. The equality
$$
d_B(y) = \left\{ \begin{array}{cl}
\displaystyle\binom{k + n - d - 1}{k} & \text{ if } y = k b \text{ with } k \in \N ,\\ \ \\
0 & \text{otherwise},
\end{array}\right.
$$
for all $y \in \Z^m$, completes the proof.
\end{proof}

When $m = 1$, i.e., when $\mS = \langle a_1,\ldots,a_n \rangle\subset \N$, $\mS$ is a numerical semigroup with a unique Betti element $b \in \N$ if and only if there exist pairwise relatively prime integers $b_1,\ldots,b_n \geq 2$ such that $a_i := \prod_{j\neq i} b_j$, for all $i \in \{1,\ldots,n\}$, and $b = \prod_{i =1}^n b_i$ (see \cite{GOR}). In this setting, Theorem~\ref{uniquebetti} can be refined as follows.

\begin{corollary}\label{uniquebettinumerical}
Let $\mS = \langle a_1,\ldots, a_n \rangle \subset \N$ be a numerical semigroup with a unique Betti element $b \in \N$. Then,
$$
\mu_{\mS}(x) = \left\{ \begin{array}{cl}
\displaystyle(-1)^{\mid A \mid} \, {\binom{k + n - 2}{k}}&\text{if } x = \sum_{i \in A} a_i + k b  \text{ for some } A \subset \{1,\ldots,n\}, k \in \N,\\ \ \\
0 & \text{otherwise.}
\end{array}\right.
$$
\end{corollary}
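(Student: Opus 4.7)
The plan is to derive this corollary as a specialization of Theorem~\ref{uniquebetti} to the case $m=1$, $d=1$. In that theorem the formula is
\[
\mu_{\mS}(x) \;=\; \sum_{j=1}^{t} (-1)^{|A_j|}\binom{k_{A_j}+n-2}{k_{A_j}},
\]
where the $A_j$ range over all subsets $A\subset\{1,\ldots,n\}$ for which there exists $k_A\in\N$ with $x-\sum_{i\in A}a_i=k_A b$. To match the statement of Corollary~\ref{uniquebettinumerical}, I need to show that for a given $x\in\Z$ there is \emph{at most one} such subset $A$ (and then the corresponding $k_A$ is automatically unique). Once this is established, the right-hand side of Theorem~\ref{uniquebetti} collapses to a single term when the representation exists, and is empty (hence zero) otherwise, which is exactly the piecewise formula to be proved.

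For the uniqueness step, I would use the explicit description of the generators recalled just before the statement: $a_i=\prod_{j\neq i}b_j=b/b_i$ with pairwise coprime $b_1,\ldots,b_n\ge 2$. The key arithmetic observations are that $b_i$ divides $b$ and divides $a_j$ for every $j\neq i$, while $\gcd(a_i,b_i)=1$. Now suppose
\[
\sum_{i\in A}a_i + kb \;=\; \sum_{i\in A'}a_i + k'b
\]
for some subsets $A,A'\subset\{1,\ldots,n\}$ and $k,k'\in\N$. Reducing modulo $b_i$ kills $b$ and every $a_j$ with $j\neq i$, leaving
\[
\mathbf{1}_{A}(i)\,a_i \;\equiv\; \mathbf{1}_{A'}(i)\,a_i \pmod{b_i}.
\]
Since $a_i$ is a unit modulo $b_i$, we get $\mathbf{1}_A(i)\equiv \mathbf{1}_{A'}(i)\pmod{b_i}$. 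The two indicator values lie in $\{0,1\}$, so their difference lies in $\{-1,0,1\}$; the hypothesis $b_i\ge 2$ then forces equality. Doing this for every $i$ yields $A=A'$, and then $kb=k'b$ gives $k=k'$.

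Combining these two paragraphs proves the corollary: for a given $x\in\Z$ there is at most one pair $(A,k)$ with $x=\sum_{i\in A}a_i+kb$, so the sum in Theorem~\ref{uniquebetti} has at most one nonzero term, which is precisely $(-1)^{|A|}\binom{k+n-2}{k}$. The only genuine obstacle is the uniqueness argument, and in my view the decisive point, easy to miss, is that the bound $b_i\ge 2$ is what upgrades a congruence between $\{0,1\}$-valued indicators into an actual equality; without it (e.g.\ if some $b_i$ were $1$, which is excluded by the unique-Betti-element hypothesis) the representation would not be unique and the refined formula would fail.
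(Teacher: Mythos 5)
Your proof is correct, and it reaches the same key reduction as the paper—namely that the corollary follows from Theorem~\ref{uniquebetti} once one shows that a given $x$ admits at most one subset $A$ with $x-\sum_{i\in A}a_i\in\N b$—but you establish that uniqueness by a genuinely different argument. The paper proves it algebraically: it invokes \cite[Example 12]{GOR} to write $\IS=(f_2,\ldots,f_n)$ with $f_i=x_1^{b_1}-x_i^{b_i}$, observes that two distinct representations would put the nonzero binomial $\prod_{i\in A_1}x_i-x_1^{b_1k}\prod_{i\in A_2}x_i$ in $\IS$, and derives a contradiction because no $x_j^{b_j}$ divides the squarefree monomial $\prod_{i\in A_1}x_i$. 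You instead argue purely arithmetically, reducing the equality $\sum_{i\in A}a_i+kb=\sum_{i\in A'}a_i+k'b$ modulo each $b_i$ and using that $a_i$ is a unit mod $b_i$ while $a_j\equiv b\equiv 0\pmod{b_i}$ for $j\neq i$, so the two $\{0,1\}$-valued indicators must agree since $b_i\ge 2$. Both routes rest on the same structural input from \cite{GOR} ($a_i=\prod_{j\neq i}b_j$ with the $b_j$ pairwise coprime and $\ge 2$); yours is more elementary and self-contained, avoiding any appeal to the explicit generating set of the toric ideal, while the paper's version fits more naturally into its ideal-theoretic framework and reuses machinery already present in Section~\ref{secHilbert}. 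Your closing observation that $b_i\ge 2$ is the decisive hypothesis is exactly the point where the paper's monomial-divisibility argument also bites, so the two proofs fail in the same degenerate situation.
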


\begin{proof}
Since $d = 1$, it is sufficient, by Theorem~\ref{uniquebetti}, to prove that, for every $A_1,A_2\subset \{1,\ldots,n\}$, if $b$ divides $\sum_{i\in A_1} a_i -\sum_{i \in A_2} a_i$, then $A_1 = A_2$. Let $b_1,\ldots,b_n \geq 2$ such that $a_i =\prod_{j \neq i} b_j$. By \cite[Example 12]{GOR} we have that $\IS = (f_2,\ldots,f_n)$, where $f_i := x_1^{b_1} - x_i^{b_i}$ for all $i \in \{2,\ldots,n\}$. Assume that there exist $A_1, A_2\subset \{1,\ldots,n\}$ such that $A_1 \not=A_2$ and $\sum_{i \in A_1}a_i - \sum_{i \in A_2} a_i = k b$, for some $k \in \N$. Thus, the binomial $g := \prod_{i \in A_1} x_i -x_1^{b_1 k}\, \prod_{i \in A_2} x_i \not= 0$ belongs to $\IS$ and it can be written as a combination of $f_2,\ldots,f_n$. However, since $x_j^{b_j}$ does not divide $\prod_{i \in A_1} x_i$ for all $j \in \{1,\ldots,n\}$, we obtain a contradiction.
\end{proof}

As a direct consequence of this result, we recover Dedden's result.

\begin{corollary}\cite{Deddens}
Let $a,b \in \Z^+$ be relatively prime integers and consider $\mS := \langle a, b \rangle\subset\N$. Then,
$$
\mu_{\mS}(x) = \left\{ \begin{array}{rl}
 1 & {\text \  if \ } x \geq 0 \ {\text and \ } x \equiv 0 {\text \ or \ } a+b \ ({\rm mod \ } ab),\\
 -1 & {\text \ if \ } x \geq 0 {\text \ and \ } x \equiv a {\text \ or \ } b \ ({\text mod \ } ab),\\
 0 & {\text \ otherwise.}
\end{array}\right.
$$
\end{corollary}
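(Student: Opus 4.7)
The plan is to derive the corollary as a direct specialization of Corollary~\ref{uniquebettinumerical} to the two-generator case. First I would observe that $\mS = \langle a, b\rangle$ with $\gcd(a,b)=1$ is a numerical semigroup with a unique Betti element $ab$: indeed, its toric ideal $\IS$ is generated by the single binomial $x_1^b - x_2^a$, which is $\mS$-homogeneous of $\mS$-degree $ab$. Equivalently, in the parametrization of \cite{GOR} used inside Corollary~\ref{uniquebettinumerical}, one takes $b_1 = b$ and $b_2 = a$, so that $a_1 = b_2 = a$, $a_2 = b_1 = b$, and the unique Betti element is $b_1 b_2 = ab$.

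Next I would substitute $n = 2$ into the formula of Corollary~\ref{uniquebettinumerical}. The binomial coefficient simplifies to
$$
\binom{k + n - 2}{k} = \binom{k}{k} = 1
$$
for every $k \in \N$, so that $\mu_{\mS}(x) = (-1)^{|A|}$ whenever $x$ admits a representation $x = \sum_{i \in A} a_i + k\,(ab)$ for some $A \subset \{1,2\}$ and some $k \in \N$, and $\mu_{\mS}(x) = 0$ otherwise. The well-definedness of this formula, i.e.\ the fact that the relevant representation is unique, is exactly the disjointness statement already established in the proof of Corollary~\ref{uniquebettinumerical}.

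Finally I would enumerate the four subsets of $\{1,2\}$: for $A = \emptyset, \{1\}, \{2\}, \{1,2\}$ the offsets $\sum_{i \in A} a_i$ are $0, a, b, a+b$, yielding residues $0, a, b, a+b$ modulo $ab$ with signs $+1, -1, -1, +1$ respectively, and in every other case $\mu_{\mS}(x) = 0$. This is exactly the statement of the corollary. I do not anticipate any serious obstacle; the only point that merits a brief comment is the verification that $\IS$ really has a single generator of $\mS$-degree $ab$ (equivalently that $\mS$ falls into the framework of \cite{GOR} with $n=2$), which is classical.
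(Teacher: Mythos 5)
Your proposal is correct and follows exactly the route the paper intends: the paper gives no separate argument for this corollary, simply presenting it as a direct consequence of Corollary~\ref{uniquebettinumerical}, which is precisely the specialization to $n=2$ (with unique Betti element $ab$, binomial coefficient $\binom{k}{k}=1$, and the four offsets $0,a,b,a+b$ carrying signs $+1,-1,-1,+1$) that you carry out.
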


%%%%%%%%%%%%%%%%%%%%%%%%%%%%%%%%%%%%%%%%%%%%
\subsection{Three generated complete intersection numerical semigroups.}
%%%%%%%%%%%%%%%%%%%%%%%%%%%%%%%%%%%%%%%%%%%%

We provide a semi-explicit formula for $\mu_{\mS}$, when $\mS$ is a complete intersection numerical semigroup minimally generated by the set $\{a_1, a_2, a_3\}$. When $\mS = \langle a_1,a_2,a_3 \rangle \subset \N$, Herzog proves in \cite{Herzog} that $\mS$ is a complete intersection if and only if $\gcd\{a_i,a_j\}\, a_k \in \langle a_i, a_j \rangle$ with $\{i,j,k\} = \{1,2,3\}$. Suppose that $da_1 \in \langle a_2, a_3 \rangle$, where $d := \gcd\{a_2,a_3\}$.

For every $x \in \Z$, there exists a unique $\alpha(x) \in\{0,\ldots,d-1\}$ such that $\alpha(x) a_1 \equiv x \ ({\rm mod}\ d)$. It is easy to check that, for every $x, y \in \Z$,
\begin{equation}\label{alfa}
\alpha(x - y) = \left\{ \begin{array}{cl}
\alpha(x) - \alpha(y) & \text{if } \alpha(x)\geq \alpha(y),\\ \\
d + \alpha(x) - \alpha(y) & \text{otherwise}.
\end{array}\right.
\end{equation}

\begin{theorem}\label{3ic}
Let $\mS = \langle a_1,a_2,a_3 \rangle$ be a numerical semigroup such that $d a_1 \in \langle a_2,a_3 \rangle$, where $d :=
{\gcd}\{a_2,a_3\}$. For all $x  \in \Z$, we have that $\mu_{\mS}(x) = 0 $, if $\alpha(x) \geq 2$, and
$$
\mu_{\mS}(x) = (-1)^{\alpha}\, \left(d_B(x') - d_B(x'-a_2) - d_B(x'-a_3) + d_B(x' - a_2 - a_3)\right)
$$
otherwise, where $x' := x - \alpha(x) a_1$ and $B := (d a_1, a_2\, a_3 / d)$.
\end{theorem}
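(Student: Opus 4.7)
The plan is to apply Corollary~\ref{mobden} directly, after pinning down the $\mS$-degrees of a minimal system of generators of $\IS$, and then to exploit a divisibility trick that kills most of the $8$ terms.

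First I would argue that the toric ideal is $\IS = (f_1, f_2)$ with $\deg_{\mS}(f_1) = d a_1$ and $\deg_{\mS}(f_2) = a_2 a_3 / d$. Writing $d a_1 = e a_2 + f a_3$ with $e,f \in \N$ produces the binomial $f_1 := x_1^d - x_2^e x_3^f$, and the relation $(a_3/d)\, a_2 = (a_2/d)\, a_3$ produces $f_2 := x_2^{a_3/d} - x_3^{a_2/d}$. By Herzog's characterization, $\IS$ is a complete intersection of codimension $2$, and a straightforward argument shows that $f_1, f_2$ are a minimal system of generators. Then Corollary~\ref{mobden} gives
$$
\mu_{\mS}(x) = \sum_{A \subset \{1,2,3\}} (-1)^{|A|}\, d_B\!\left(x - \sum_{i \in A} a_i\right),
$$
with $B = (d a_1,\, a_2 a_3 / d)$.

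The key reduction is the observation that both entries of $B$ are divisible by $d$, so $d_B(y) = 0$ whenever $d \nmid y$, i.e.\ whenever $\alpha(y) \neq 0$. Since $\alpha(a_1) = 1$ and $\alpha(a_2) = \alpha(a_3) = 0$, formula~(\ref{alfa}) yields
$$
\alpha\!\left(x - \sum_{i \in A} a_i\right) = \begin{cases} \alpha(x) & \text{if } 1 \notin A, \\ \alpha(x) - 1 & \text{if } 1 \in A \text{ and } \alpha(x) \ge 1, \\ d - 1 & \text{if } 1 \in A \text{ and } \alpha(x) = 0. \end{cases}
$$

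I would then split into three cases on $\alpha(x)$. If $\alpha(x) \ge 2$, then every one of the eight arguments has nonzero $\alpha$, so all $d_B$ terms vanish and $\mu_{\mS}(x) = 0$. If $\alpha(x) = 0$, only the subsets $A \subset \{2,3\}$ give a nonzero contribution (with $x' = x$), and the surviving sum is precisely $d_B(x') - d_B(x' - a_2) - d_B(x' - a_3) + d_B(x' - a_2 - a_3)$, matching $(-1)^0$. If $\alpha(x) = 1$, only the subsets $A$ with $1 \in A$ survive; writing $A = \{1\} \cup A'$ with $A' \subset \{2,3\}$ and factoring the sign $(-1)^{1} = -1$, together with the substitution $x' := x - a_1$, recovers the stated expression with the overall sign $(-1)^{1}$. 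The main (mild) obstacle is simply the initial identification of the generating degrees of $\IS$; once that is in place, the argument is bookkeeping on $\alpha$ and signs.
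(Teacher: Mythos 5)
Your proposal is correct and follows essentially the same route as the paper's proof: it identifies the two generators of $\IS$ of $\mS$-degrees $d a_1$ and $a_2 a_3/d$ via Herzog's result, applies Corollary~\ref{mobden} to get the eight-term sum with $B = (d a_1, a_2 a_3/d)$, and eliminates terms using the observation that $d_B(y) = 0$ unless $\alpha(y) = 0$, followed by the same three-way case split on $\alpha(x)$.
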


\begin{proof}
Suppose that $d a_1 = \gamma_2 a_2 + \gamma_3 a_3$ with $\gamma_2,\gamma_3 \in \N$. Then, by \cite[Theorem 3.10]{Herzog}, it follows that
$$
I_{\mS} = \left(x_1^d - x_2^{\gamma_2} x_3^{\gamma_3},\, x_2^{a_3/d} -x_3^{a_2/d}\right).
$$
So, $I_{\mS}$ is generated by two $\mS$-homogeneous polynomials of $\mS$-degrees $d a_1$ and $a_2 a_3 / d$. Hence, from Corollary~\ref{mobden}, we have
\begin{equation}\label{mudenum}
\mu_{\mS}(x) =
\begin{array}[t]{l}
 d_{B}(x) - d_{B}(x-a_1) - d_{B}(x-a_2) - d_{B}(x-a_3) + d_{B}(x-(a_1+a_2)) + \\
 + d_{B}(x-(a_1+a_3)) + d_{B}(x-(a_2+a_3)) - d_{B}(x-(a_1+a_2+a_3)), \\
\end{array}
\end{equation}
for all integers $x$, where $B :=(d a_1, a_2 a_3 / d)$. Since $\alpha(d a_1) = \alpha(a_2 a_3 / d) = 0$. It follows that $\alpha(y) = 0$ if $y \in \langle d a_1, a_2 a_3 / d \rangle$. As a consequence of this, $d_B(y) = 0$ whenever $\alpha(y) \neq 0$.

Let $C:=\{0,a_1,a_2,a_3,a_1+a_2,a_2+a_3,a_3+a_1,a_1+a_2+a_3\}$. Notice that $\alpha(y) \in \{0,1\}$, for all $y \in C$. We distinguish three different cases upon the value of $\alpha :=\alpha(x)$, for $x \in\Z$.
\begin{case}
$\alpha \geq 2$.\\
We deduce that $\alpha(x - y) = \alpha(x) - \alpha(y) \neq 0$ and $d_B(x - y)=0$, for all $y\in C$. Therefore, using (\ref{mudenum}), we obtain that $\mu_{\mS}(x)=0$.
\end{case}
\begin{case}
$\alpha = 1$.\\
We deduce that $\alpha(x - y) \neq 0$ and $d_B(x-y) = 0$ for all $y\in \{0,a_2,a_3,a_2+a_3\}$. Therefore, using (\ref{mudenum}), we obtain that
$$
\mu_{\mS}(x) = -d_B(x - a_1) + d_B(x-a_1-a_2) + d_B(x-a_1-a_3) -
d_B(x-a_1-a_2-a_3).
$$
\end{case}
\begin{case}
$\alpha = 0$.\\
Since $d \geq 2$, we deduce that $\alpha(x - y) \neq 0$ and $d_B(x-y) = 0$ for all $y\in \{a_1,a_1+a_2,a_1+a_3,a_1+a_2+a_3\}$. Therefore, using (\ref{mudenum}), we obtain that
$$
\mu_{\mS}(x) = d_B(x) - d_B(x-a_2) - d_B(x-a_3) + d_B(x-a_2-a_3).
$$
\end{case}
\noindent This completes the proof.
\end{proof}

Theorem~\ref{3ic} yields an algorithm for computing $\mu_{\mS}(x)$, for all $x \in \Z$, which relies on the computation of four values of $d_{B}(y)$, where $B = (d a_1, a_2 a_3 / d)$. It is worth mentioning that in \cite[Section 4.4]{Alfonsin} there are several results and methods to compute these values.

Also note that Theorem~\ref{3ic} generalizes \cite[Theorem 3]{CA}, where the authors provide a semi-explicit formula for $\mS = \langle 2q, 2q + e, 2q + 2e \rangle$ where $q, e \in \Z^+$ and $\gcd\{2q,2q+e,2q+2e\} = 1$. Indeed, if $\mS = \langle a, a + e, \ldots, a + ke \rangle$ with $\gcd\{a,e\} = 1$ and $k \geq 2$, then $\mS$ is a complete intersection if and only if $k = 2$ and $a$ is even (see \cite{BG}).

%%%%%%%%%%%%%%%%%%%%%%%%%%%%%%%%%%%%%%
\section{When is a poset equivalent to a semigroup poset?}
%%%%%%%%%%%%%%%%%%%%%%%%%%%%%%%%%%%%%%

A natural question is whether a poset $\mP$ is {\em isomorphic} to a poset associated to a semigroup $\mS$ since, in such a case, one might be able to calculate $\mu_\mP$  by computing $\mu_\mS$ instead. Let us illustrate this with the following two examples in which we can easily find an appropriate order isomorphism 
between the poset $\mP$ and the poset associated to the semigroup $\N^m = \langle e_1,\ldots,e_m \rangle$.

\begin{example} We consider the classical arithmetic M\"obius function $\mu$. Recall that for all $a,b \in \N$ such that $a \mid b$, we have that
\begin{equation}\label{aritm}
\mu (a,b) = \left\{\begin{array}{cl}
 (-1)^{r} &  \text{if } b / a \text{ is a product of } r \text{ different prime numbers,} \\
  0 & \text{otherwise}.
\end{array}\right.
\end{equation}
For every $m \in \Z^+$, we denote by $p_1,\ldots, p_m$ the first $m$ prime numbers and by $\N_m$ the set of integers that can be written as a product of powers of $p_1,\ldots,p_m$. Then, for all $m \geq 1$, the map $\psi: \N_m \rightarrow \N^m$ defined as $\psi(p_1^{\,\alpha_1} \cdots p_m^{\,\alpha_m}) = (\alpha_1,\ldots,\alpha_m)$ is an order isomorphism between $\N_m$, ordered by divisibility, and the poset $(\N^m,\leq_{\N^m})$. Hence, for every $a, b \in \N$, we consider $m \in \Z^+$ such that $a, b \in \N_m$ and we recover the formula (\ref{aritm}) by means of the M\"obius function of $\N^m$ given in Example~\ref{nm}.
\end{example}
 
\begin{example}
Let $D = \{d_1,\ldots,d_m\}$ be a finite set and let us consider the (locally finite) poset $\mP$ of multisets of $D$ ordered by inclusion. For every $S, T \in \mP$ such that $T \subset S$, it is well known that
\begin{equation}\label{musubset}
\mu_{\mP} (T,S) = \left\{\begin{array}{cl}
 (-1)^{|S \setminus T|} &  \text{if } T \subset S \text{ and }S \setminus T \text{ is a set,} \\
 0 & \text{otherwise.}
\end{array}\right.
\end{equation}
\end{example}

We consider the map $\psi: \mP \rightarrow \N^m$ defined as $\psi(S) = (s_1,\ldots,s_m)$, where $s_i$ denotes the multiplicity of $d_i$ in $S$, for all $S \in \mP$. We consider the order in $\N^m$ induced by the semigroup $\N^m$, i.e., $\alpha \leq_{\N^m} \beta$ if and only if $\beta - \alpha \in \N^m$ for all $\alpha, \beta \in \N^m$. We have that $\psi$ is an {\it order isomorphism}, i.e., an order preserving and order reflecting bijection. Thus, we can say that the poset of multisets of a finite set is a particular case of semigroup poset. This implies that for all $S, T \in \mP$ such that $T \subset S$, $\mu_{\mP}(T,S) = \mu_{\N^m}(\psi(T), \psi(S)) = \mu_{\N^m}(\psi(S) - \psi(T))$ and by Example~\ref{nm} we retrieve the formula (\ref{musubset}).

In the rest of this section, we present a characterization of those locally finite posets $\mP$ isomorphic to the poset associated to a semigroup $\mS$ (Theorem~\ref{equivasemigrupo}).

Let $(\mP, \leq_{\mP})$ be a locally finite poset. For every $x \in\mP$, we set $\mP_x := \{y \in \mP \, \vert \, x \leq_{\mP} y\}$ and we consider the restricted M\"obius function $\mu_{\mP}(-,x): \mP_x\rightarrow \Z$. It is clear that, if there exists a pointed semigroup $\mS$ and an order isomorphism $\psi: (\mP_x, \leq_{\mP}) \longrightarrow (\mS, \leq_{\mS})$, then $\mu_{\mP}(-,x)$ can be computed by means of the M\"obius function of $(\mS,\leq_{\mS})$, since $\mu_{\mP}(y,x) = \mu_{\mS}(\psi(y))$ for all $y \in \mP_x$. 

The poset $\mP_x$ is said to be \emph{autoequivalent} if and only if, for all $y \in \mP_x$, there exists an order isomorphism $g_y: \mP_x \longrightarrow \mP_y$ such that $g_y \circ g_z = g_z \circ g_y$, for all $y,z \in \mP_x$, and $g_x$ is the identity. For all $y \in \mP_x$, we set  $l_1(y) := \{z \in \mP \, \vert\, \text{ there is no } u \in \mP$ such that $y \lneq u \lneq z\}$. Whenever $\mP_x$ is autoequivalent with isomorphisms $\{g_y\}_{x \leq y}$ and $l_1(x)$ is a finite set of $n$ elements, we  associate to $\mP$ a subgroup $L_{\mP} \subset \Z^n$ in the following way.

Let $l_1(x) = \{x_1,\ldots,x_n\} \subset \mP$ and consider the map
$$
f: \N^n \longrightarrow \mP
$$
defined as $f(0,\ldots,0) = x$, and for all $\alpha \in \N^n$ and all $i \in \{1,\ldots,n\}$,  $f(\alpha + e_i) = g_{x_i}(f(\alpha))$, where $\{e_1,\ldots,e_n\}$ is the canonical basis of $\N^m$. In particular, $f(e_i) = g_{x_i}(f(0)) = g_{x_i}(x) = x_i$, for all $i \in \{1,\ldots,n\}$.

\begin{lemma}\label{surjective}
$f$ is well defined and is surjective.
\end{lemma}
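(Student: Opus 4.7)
The plan is to establish the two claims separately. Well-definedness reduces to checking that the recursion $f(\alpha+e_i)=g_{x_i}(f(\alpha))$ is consistent when $\beta\in\N^n$ can be written as $\alpha+e_i$ in several ways, and surjectivity will follow from an induction on chain length, using the order-isomorphism property of each $g_{x_i}$ to lift $y\in\mP_x$ to an element closer to $x$.

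For well-definedness, I would induct on $|\beta|:=\beta_1+\cdots+\beta_n$. The only ambiguity arises when $\beta=\alpha+e_i=\alpha'+e_j$ with $i\neq j$; set $\gamma:=\beta-e_i-e_j\in\N^n$. Since $|\alpha|=|\alpha'|=|\beta|-1$, the inductive hypothesis gives $f(\alpha)=g_{x_j}(f(\gamma))$ and $f(\alpha')=g_{x_i}(f(\gamma))$, whence
\[
g_{x_i}(f(\alpha))=g_{x_i}\bigl(g_{x_j}(f(\gamma))\bigr)=g_{x_j}\bigl(g_{x_i}(f(\gamma))\bigr)=g_{x_j}(f(\alpha')),
\]
the middle equality being the hypothesized commutativity $g_{x_i}\circ g_{x_j}=g_{x_j}\circ g_{x_i}$. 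Equivalently, $f(\alpha)=g_{x_1}^{\alpha_1}\circ\cdots\circ g_{x_n}^{\alpha_n}(x)$, independently of the order of composition.

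For surjectivity, first note that $f(\N^n)\subset\mP_x$: starting from $f(0)=x$, each $g_{x_i}$ sends $\mP_x$ into $\mP_{x_i}\subset\mP_x$, the inclusion holding because $x\leq_{\mP}x_i$. Given $y\in\mP_x$, the interval $[x,y]_{\mP}$ is finite by local finiteness of $\mP$, so the length $\ell(y)$ of its longest chain is a well-defined nonnegative integer; I induct on $\ell(y)$. If $\ell(y)=0$ then $y=x=f(0)$. Otherwise $y>_{\mP}x$ and $(x,y]_{\mP}$ admits a minimal element $z$; any $u\in\mP$ with $x\lneq u\lneq z$ would force $u\in(x,y]_{\mP}$, contradicting minimality of $z$, so $z\in l_1(x)$, say $z=x_i$. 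Since $x_i\leq_{\mP}y$, I may set $y':=g_{x_i}^{-1}(y)\in\mP_x$ using that $g_{x_i}\colon\mP_x\to\mP_{x_i}$ is an order isomorphism. That same isomorphism bijects chains of $[x,y']_{\mP}$ with chains of $[x_i,y]_{\mP}$, so $\ell(y')$ equals the longest chain length from $x_i$ to $y$; prepending $x\lneq x_i$ to any such chain shows $\ell(y)\geq\ell(y')+1$. The inductive hypothesis yields $\alpha'\in\N^n$ with $f(\alpha')=y'$, and then $f(\alpha'+e_i)=g_{x_i}(y')=y$.

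The main obstacle is the surjectivity step: one must identify, intrinsically from the data of an autoequivalence, a cover element $x_i\in l_1(x)$ that lies below $y$, and then verify that the lift $y':=g_{x_i}^{-1}(y)$ actually sits closer to $x$. The order-isomorphism property of $g_{x_i}$ is precisely what guarantees that the chain-length measure $\ell$ strictly decreases under this lift, making the induction go through.
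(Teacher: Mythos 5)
Your proof is correct. The well-definedness half is the same as the paper's: both reduce two decompositions $\beta=\alpha+e_i=\alpha'+e_j$ to a common predecessor $\gamma=\beta-e_i-e_j$ and invoke the commutativity $g_{x_i}\circ g_{x_j}=g_{x_j}\circ g_{x_i}$, though you make explicit the induction on $|\beta|$ that the paper leaves tacit. For surjectivity you descend from the opposite end: the paper takes an element $z$ that $y$ covers (so $y\in l_1(z)$), writes $y=g_z(x_j)$ for some $j$, and uses the commutativity $g_{x_j}\circ g_z=g_z\circ g_{x_j}$ for an \emph{arbitrary} $z\in\mP_x$ to get $y=f(\alpha+e_j)$ once $z=f(\alpha)$; you instead take a cover $x_i$ of $x$ lying below $y$ and pull $y$ back through $g_{x_i}^{-1}$, which uses only that $g_{x_i}\colon\mP_x\to\mP_{x_i}$ is an order isomorphism sending the bottom element to $x_i$. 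Your route buys an explicit, well-founded induction measure (the longest chain length in $[x,y]_{\mP}$, which you verify strictly decreases), whereas the paper's recursion is stated without a measure and silently assumes the covered element $z$ already lies in the image of $f$; the paper's route buys a one-line computation $g_{x_j}(g_z(x))=g_z(g_{x_j}(x))=g_z(x_j)=y$ at the cost of invoking commutativity beyond the generators. Both arguments are sound, and yours is the more carefully justified of the two.
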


\begin{proof}
Suppose that $\alpha + e_i = \beta + e_j$. Then, we set $\gamma := \alpha - e_j = \beta - e_i \in \N^n$. Thus, $f(\alpha + e_i) = g_{x_i} (f(\alpha)) = g_{x_i}(g_{x_j}(f(\gamma))) = g_{x_j}(g_{x_i}(f(\gamma))) = g_{x_j} (f(\beta)) = f(\beta + e_j)$ and $f$ is well defined.

Take $y \in \mP_x$. If $y = x$, then $y = f(0)$. If $y \neq x$, then there exists $z \in \mP_x$ such that $y \in l_1(z)$. Therefore $y = g_z(x_j)$ for some $j \in \{1,\ldots,n\}$. We claim that if $z = f(\alpha)$, then $y = f(\alpha + e_j)$. Indeed, $f(\alpha + e_j) = g_{x_j}(f(\alpha)) = g_{x_j}(z) = g_{x_j} (g_z(x)) = g_z(g_{x_j}(x)) = g_z(x_j) = y$.
\end{proof}

Now, we set $L_{\mP} := \{\alpha - \beta  \in \Z^n \, \vert \, f(\alpha) = f(\beta) \}$.

\begin{lemma}
$L_{\mP}$ is a subgroup of $\Z^n$.
\end{lemma}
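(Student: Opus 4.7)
The plan is to verify the three subgroup axioms for $L_{\mP} \subseteq \Z^n$: that $0 \in L_{\mP}$, that $L_{\mP}$ is closed under negation, and that it is closed under addition. The first is immediate from $0 = 0 - 0$ and $f(0) = f(0)$, and the second is immediate from the symmetry of the defining condition $f(\alpha) = f(\beta)$. Thus the heart of the argument is closure under addition.

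The key technical device will be to bundle the commuting isomorphisms into a single commuting family of self-maps of $\mP_x$. Because $y \in \mP_x$ implies $\mP_y \subseteq \mP_x$, each $g_y$ may be regarded as a self-map $\mP_x \to \mP_x$, and the hypothesis that the $g_{x_i}$'s pairwise commute then allows me to define, for every $\alpha = (\alpha_1,\ldots,\alpha_n) \in \N^n$, the composition
$$
h_\alpha := g_{x_1}^{\alpha_1} \circ \cdots \circ g_{x_n}^{\alpha_n} \colon \mP_x \longrightarrow \mP_x
$$
unambiguously (the order of composition is irrelevant), with $h_\alpha \circ h_\beta = h_{\alpha+\beta} = h_\beta \circ h_\alpha$ for all $\alpha,\beta \in \N^n$. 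A short induction on $\alpha_1 + \cdots + \alpha_n$, using the recursion that defines $f$, then gives $f(\alpha) = h_\alpha(x)$, and therefore
$$
f(\alpha + \beta) = h_{\alpha+\beta}(x) = h_\alpha(h_\beta(x)) = h_\alpha(f(\beta)) = h_\beta(f(\alpha)).
$$

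With this identity in hand, closure under addition reduces to a chain of substitutions. Given $\gamma_i = \alpha_i - \beta_i \in L_{\mP}$ with $f(\alpha_i) = f(\beta_i)$ for $i = 1,2$, I would write
$$
f(\alpha_1 + \alpha_2) = h_{\alpha_2}(f(\alpha_1)) = h_{\alpha_2}(f(\beta_1)) = h_{\beta_1}(f(\alpha_2)) = h_{\beta_1}(f(\beta_2)) = f(\beta_1 + \beta_2),
$$
where the outer equalities come from the identity above, the second and fourth use $f(\alpha_i) = f(\beta_i)$, and the middle one uses the commutation $h_{\alpha_2} \circ h_{\beta_1} = h_{\beta_1} \circ h_{\alpha_2}$ applied to $x$. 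This yields $\gamma_1 + \gamma_2 = (\alpha_1 + \alpha_2) - (\beta_1 + \beta_2) \in L_{\mP}$.

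The main, and really the only, obstacle is the identity $f(\alpha + \beta) = h_\alpha(f(\beta))$, which in turn rests on promoting the pairwise commutation of the $g_{x_i}$'s to the joint well-definedness and commutativity of the family $\{h_\alpha\}_{\alpha \in \N^n}$. Once that bookkeeping is carried out, the three subgroup axioms fall out as above.
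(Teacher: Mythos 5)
Your proof is correct and follows essentially the same route as the paper: the paper's argument is the one-line chain $f(\alpha_1+\alpha_2)=f(\beta_1+\alpha_2)=f(\beta_1+\beta_2)$, which silently relies on exactly the identity $f(\alpha+\beta)=h_\beta(f(\alpha))$ that you take the trouble to establish via the commuting family $\{h_\alpha\}$. So your writeup is a more carefully justified version of the same argument, not a different one.
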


\begin{proof}
If $\gamma \in L_{\mP}$, then $-\gamma \in L_{\mP}$. Moreover, if $\gamma_1, \gamma_2 \in L_{\mP}$, then $\gamma_1 + \gamma_2 \in L_{\mP}$. Indeed, take $\alpha, \alpha', \beta, \beta' \in \N^m$ such that $f(\alpha) = f(\alpha')$, $\gamma_1 = \alpha-\alpha'$, $f(\beta) = f(\beta')$  and $\gamma_2 = \beta - \beta'$. Then $f(\alpha + \beta) = f(\alpha' + \beta) = f(\alpha' + \beta')$ and the lemma is proved.
\end{proof}

If $L$ is a subgroup of $\Z^n$, then its {\it saturation} is the group defined by
$$
\textrm{Sat}(L) := \left\{\gamma \in \Z^n \ \middle| \text{ there exists } d \in \Z^+ \text{ such that } d \gamma \in L\right\}.
$$

\begin{theorem}\label{equivasemigrupo}
Let $\mP$ be a locally finite poset and let $x \in \mP$. Then, $(\mP_x, \leq)$ is isomorphic to $(\mS, \leq_{\mS})$ for some (pointed) semigroup $\mS \subset \Z^{m}$ if and only if $\mP_x$ is autoequivalent, $l_1(x)$ is finite and $L_{\mP} = {\rm Sat}(L_{\mP})$.
\end{theorem}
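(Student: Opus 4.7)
The plan is to prove the equivalence in both directions via the map $f : \N^n \to \mP$ of the setup, with $L_{\mP}$ playing the role of a kernel for $f$.

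For $(\Rightarrow)$, assume $\varphi: \mP_x \to \mS$ is an order isomorphism with $\mS \subset \Z^m$ pointed. I would set $g_y := \varphi^{-1} \circ \tau_{\varphi(y)} \circ \varphi$, where $\tau_s$ is translation by $s$ in $\mS$; the $g_y$ commute because translations do, and $g_x = \mathrm{id}$ since $\varphi(x) = 0$. The elements of $l_1(x)$ correspond under $\varphi$ to the atoms of $\mS$, which coincide with the indecomposable elements and so lie in any generating set of $\mS$ — in particular they are finite in number. Finally, $f$ corresponds under $\varphi$ to the monoid morphism $\alpha \mapsto \sum_i \alpha_i\, \varphi(x_i)$, so $L_{\mP}$ is the kernel of its extension to a group homomorphism $\Z^n \to \Z^m$; since $\Z^m$ is torsion-free, this kernel is saturated.

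For $(\Leftarrow)$, the strategy is to realize $\mS$ as $\pi(\N^n)$, where $\pi: \Z^n \to \Z^n/L_{\mP}$ is the quotient map. The saturation hypothesis is used precisely here: it forces $\Z^n/L_{\mP}$ to be torsion-free, hence isomorphic to some $\Z^d$, so $\mS \subset \Z^d$ as required. I would then establish that $f$ descends to an order isomorphism $\bar f : \mS \to \mP_x$ and that $\mS$ is pointed. Surjectivity of $\bar f$ is Lemma~\ref{surjective}, and the remaining properties (well-definedness, injectivity, order-reflection, pointedness) all reduce to the central equivalence
$$
f(\alpha) = f(\beta) \ \Longleftrightarrow \ \alpha - \beta \in L_{\mP}, \qquad \alpha,\beta \in \N^n.
$$
The nontrivial implication is proved by writing $\alpha - \beta = \gamma - \delta$ with $f(\gamma) = f(\delta)$; one first deduces $f(\beta + \gamma) = f(\beta + \delta)$ from $g_\beta(f(\gamma)) = g_\beta(f(\delta))$, and then cancels the injection $g_\delta := g_{x_1}^{\delta_1} \circ \cdots \circ g_{x_n}^{\delta_n}$ in the chain $f(\alpha + \delta) = f(\beta + \gamma) = f(\beta + \delta)$ to obtain $f(\alpha) = f(\beta)$. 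Order-reflection and pointedness follow from the same machinery: for the former, $f(\alpha) \leq_\mP f(\beta)$ implies $f(\beta) = g_{f(\alpha)}(f(\gamma)) = f(\alpha + \gamma)$ for some $\gamma \in \N^n$ (using surjectivity of $f$ and commutativity of $g_{f(\alpha)}$ with the $g_{x_i}$), and for the latter, $\pi(\alpha) + \pi(\beta) = 0$ yields $f(\alpha + \beta) = x$ while $f(\alpha + \beta) = g_\alpha(f(\beta)) \in \mP_{f(\alpha)}$ forces $f(\alpha) = x$, hence $\pi(\alpha) = 0$.

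The principal obstacle is the central equivalence above in the $(\Leftarrow)$ direction. It cleanly separates the roles of the hypotheses: commutativity and bijectivity of the family $\{g_y\}$ power the cancellation argument, while saturation plays only a structural role, namely ensuring that the ambient group $\Z^n/L_{\mP}$ is torsion-free so that $\mS$ can be realized inside a lattice.
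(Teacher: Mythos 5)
Your proposal is correct and follows essentially the same route as the paper: the same translation maps $g_y = \psi^{-1}\circ\tau_{\psi(y)}\circ\psi$ for the forward direction, and the same quotient construction $\mS = \pi(\N^n) \subset \Z^n/L_{\mP} \cong \Z^d$ for the converse. The only difference is that you supply the verification (the equivalence $f(\alpha)=f(\beta) \Leftrightarrow \alpha-\beta\in L_{\mP}$ via cancellation of the injective composite $g_{x_1}^{\delta_1}\circ\cdots\circ g_{x_n}^{\delta_n}$, plus order-reflection and pointedness) that the paper dismisses as ``straightforward to check,'' and your cancellation argument is sound.
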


\begin{proof}
$(\Rightarrow)$ Let $\mS \subset \Z^m$ be a (pointed) semigroup and denote by $\{a_1,\ldots,a_n\}$ its unique minimal set of generators. Assume that $\psi: \mP_x \rightarrow \mS$ is an order isomorphism. Let us prove that $\mP_x$ is autoequivalent, $\left| l_1(x)\right| = n$ and $L_{\mP} = {\rm Sat}(L_{\mP})$. First, we observe that if $x_i := \psi^{-1}(a_i)$, then $l_1(x) = \{x_1,\ldots,x_n\}$. And thus $\left| l_1(x)\right|=n$. Now, for every $y \in \mP_x$, we set
$$
\begin{array}{cccl}
g_y: & \mP_x & \longrightarrow & \mP_y \\
 & z & \longmapsto & \psi^{-1}(\psi(z) + \psi(y)).
\end{array}
$$
Then it is straightforward to check that $g_y$ is an order isomorphism. Moreover, $g_x$ is the identity map on $\mP_x$ and $g_y \circ g_z = g_z \circ g_y$, for all $y,z \in \mP_x$. And thus $\mP_x$ is autoequivalent.

Let $f: \N^m \rightarrow \mP_x$ be the map associated to $\{g_y\}_{y \leq x}$, i.e., $f(0) = x$ and if $f(\alpha) = y$, then $f(\alpha + e_j) = g_{x_j}(f(\alpha))$. We claim that $\psi(f(\alpha)) = \sum \alpha_i a_i \in \mS$, for all $\alpha = (\alpha_1,\ldots,\alpha_n) \in \N^n$. Indeed, $\psi(f(0)) = \psi(x) = 0$ and if we assume that $\psi(f (\alpha)) = \sum \alpha_i a_i$ for some $\alpha = (\alpha_1,\ldots,\alpha_n) \in \N^m$, then $\psi(f(\alpha + e_j)) = \psi (g_{x_j}(\alpha)) = \psi(z) + \psi(x_j) = \sum \alpha_i a_i + a_j$, as desired.

Since $L_{\mP} \subset {\rm Sat}(L_{\mP})$ by definition, let us prove that ${\rm Sat}(L_{\mP}) \subset L_{\mP}$. We take $\gamma \in {\rm Sat}(L_{\mP})$, then $d \gamma \in L_{\mP}$ for some $d \in \Z^+$. This means that there exist $\alpha, \beta \in \N^n$ such that $f(\alpha) = f(\beta)$ and $d \gamma = \alpha - \beta$. Hence, we have that $\sum \alpha_i a_i = \psi(f(\alpha)) = \psi(f(\beta)) = \sum \beta_i a_i$. This implies that $\sum \gamma_i a_i = 1/d \ (\sum (\alpha_i - \beta_i) a_i) = 0$. Thus, if we take $\alpha', \beta' \in \N^m$ such that $\gamma = \alpha' - \beta'$, then $\psi(f(\alpha')) = \psi(f(\beta'))$ and, whence, $f(\alpha') = f(\beta')$ and $\gamma \in L_{\mP}$. And thus $L_{\mP} = {\rm Sat}(L_{\mP})$.

\medskip

$(\Leftarrow)$ Since $L_{\mP} = {\rm Sat}(L_{\mP})$, we have that $\Z^n / L_{\mP}$ is a torsion free group. Hence there exists a group isomorphism $\rho: \Z^n / L_{\mP} \rightarrow \Z^m$, where $m = n - {\rm rk}(L_{\mP})$. We let $a_i := \rho(e_i + L_{\mP})$ for all $i \in \{1,\ldots,n\}$ and set $\mS := \langle a_1,\ldots,a_n\rangle \subset \Z^m$. We claim that $(\mP_x, \leq)$ and $(\mS, \leq_{\mS})$ are isomorphic. More precisely, it is straightforward to check that the map
$$
\begin{array}{cccl}
\psi: & \mP_x & \longrightarrow & \mS \\
& y & \longmapsto & \sum \alpha_i a_i, {\text \ if \ } f(\alpha) = y
\end{array}
$$
is an order isomorphism.
\end{proof}

The necessity direction of Theorem~\ref{equivasemigrupo} can be stated in algebraic terms as : whenever $\mP_x$ is autoequivalent and $l_1(x)$ is finite, the subgroup $L_{\mP}$ defines a lattice ideal $I := (\{ \x^{\alpha} - \x^{\beta} \, \vert \, \alpha - \beta \in L_{\mP}\})$. Moreover, $\mP_x$ is isomorphic to a semigroup poset $(\mS, \lS)$ if and only if the ideal $I$ itself is the toric ideal of a semigroup $\mS$. The latter holds if and only if $I$ is prime or, equivalently, if $L_{\mP} = {\rm Sat}(L_{\mP})$ (see \cite{ES}).

%%%%%%%%%%%%%%%%%%%%%%%%%%%%%%%%%%%%
\section*{Acknowledgments}
The authors would like to thank the anonymous referees for their valuable comments and suggestions.
%%%%%%%%%%%%%%%%%%%%%%%%%%%%%%%%%%%%

\bibliographystyle{plain}

\begin{thebibliography}{}

\end{thebibliography}


\begin{thebibliography}{99}

\bibitem{BG}{I. Bermejo, I. Garc\'{i}a-Marco, Complete intersections in certain affine and projective monomial curves, Bull Braz Math Soc, New Series 45(4), 2014, 1-26.}

\bibitem{CA} {J. Chappelon, J.~L. Ram\'{i}rez Alfons\'{i}n, On the {M}\"obius function of the locally finite poset associated with a numerical semigroup, Semigroup Forum {\bf 87} (2013), no. 2, 313--330.}

\bibitem{Deddens} {J.~A. Deddens, A combinatorial identity involving relatively prime integers, J. Combin. Theory Ser. A {\bf 26} (1979), no. 2, 189--192.}

\bibitem{ES} D. Eisenbud, B. Sturmfels, Binomial ideals, Duke Math. J. {\bf 84} (1996) 1--45.

\bibitem{FMS} K. Fischer, W. Morris and J. Shapiro, Affine semigroup rings that are complete intersections, Proc. Amer. Math. Soc. {\bf 125} (1997), 3137--3145.

\bibitem{GOR} P.~A. Garc\'{i}a-S\'{a}nchez, I. Ojeda and J.~C. Rosales, Affine semigroups having a unique Betti element, J. Algebra Appl. {\bf 12} (2013), no. 3, 1250177, 11 pp.

\bibitem{Herzog}{J. Herzog, Generators and relations of abelian semigroups and semigroup rings, Manuscripta Math. \textbf{3} (1970) 175--193.}

\bibitem{KR}{M. Kreuzer, L. Robbiano, {\it Computational Commutative Algebra 2}. Springer-Verlag Berlin Heidelberg, 2005.}

\bibitem{Alfonsin} J.~L. Ram\'{i}rez Alfons\'{i}n, {\it The Diophantine Frobenius Problem}, Oxford Lecture Series in Mathematics and Its Applications, vol. 30. Oxford University Press, Oxford (2005).

\bibitem{Rota} {G-C. Rota, On the foundations of combinatorial theory I. Theory of M\"obius functions, Z. Wahrscheinlichkeitstheorie und Verw. Gebiete {\bf 2}, (1964) 340--368.}

\bibitem{Sturm} {B. Sturmfels, {\it Gr\"{o}bner Bases and Convex Polytopes\/}, University Lecture Series {\bf 8}, American Mathematical Society, Providence, RI, 1996.}

\end{thebibliography}

\end{document}